\documentclass[11pt,a4paper,reqno]{amsart}

\usepackage[T1]{fontenc}
\usepackage{lmodern}
\usepackage[margin=25mm]{geometry}
\usepackage{mathtools,amssymb,booktabs,microtype,needspace}
\usepackage[hidelinks]{hyperref}

\newtheorem{theorem}{Theorem}[section]
\newtheorem{lemma}[theorem]{Lemma}
\newtheorem{proposition}[theorem]{Proposition}

\theoremstyle{remark}

\numberwithin{equation}{section}

\newcommand{\M}{\mathbf M}
\newcommand{\V}{\mathsf V}
\newcommand{\p}{\mathbf p}
\newcommand{\sseven}{S_7}
\newcommand{\J}{J}
\newcommand{\K}{K}
\newcommand{\Om}{\Omega}
\newcommand{\comp}[1]{\overline{#1}}
\allowdisplaybreaks[1]

\hypersetup{
  pdftitle={A separating identity for matrix semirings over S7},
  pdfauthor={Lili Wang, Jiewei Zhang, and Aifa Wang},
  pdfsubject={Strict inclusion between the varieties generated by M2(S7) and M3(S7)},
  pdfkeywords={additively idempotent semiring, matrix semiring, variety, identity}
}

\title[A separating identity for matrix semirings]
{A separating identity for matrix semirings over $S_7$}
\author{Lili Wang}
\address{School of Mathematical Sciences, Chongqing University of Technology,
Chongqing 400054, P.R. China}
\email{wllaf@cqut.edu.cn}
\author{Jiewei Zhang}
\address{School of Mathematical Sciences, Chongqing University of Technology,
Chongqing 400054, P.R. China}
\email{1114529127@qq.com}
\author{Aifa Wang\textsuperscript{*}}
\thanks{\textsuperscript{*}Corresponding author: Aifa Wang
(\href{mailto:wangaf@cqut.edu.cn}{wangaf@cqut.edu.cn}).}
\address{School of Mathematical Sciences, Chongqing University of Technology,
Chongqing 400054, P.R. China}
\email{wangaf@cqut.edu.cn}
\date{}
\subjclass[2020]{16Y60, 08B15, 03C05}
\keywords{Additively idempotent semiring, matrix semiring, variety, identity}

\begin{document}

\begin{abstract}
Jiao and Ren proved that the varieties generated by the matrix semirings
$\M_n(S_7)$ form an ascending chain of nonfinitely based varieties,
and asked whether consecutive members are equal for all $n\geq2$.
We give a negative answer by exhibiting an identity in five variables
that holds in $\M_2(S_7)$ and fails in $\M_3(S_7)$.
Consequently, $\V(\M_2(S_7))$ is a proper subvariety of
$\V(\M_n(S_7))$ for every $n\geq3$.
The proof uses the cubic-power property established by Jiao and Ren
and a description of products equal to the constant matrix $[a]_n$.
The identity forces two distinct indices to exhaust the index set
in dimension two; an additional index yields an explicit counterexample
in dimension three.
\end{abstract}

\maketitle

\section{Introduction and preliminaries}

An \emph{additively idempotent semiring}, or \emph{ai-semiring}, is an
algebra $(S,+,\cdot)$ such that $(S,+)$ is a commutative idempotent
semigroup, $(S,\cdot)$ is a semigroup, and multiplication distributes
over addition on both sides. All algebras in this paper are considered
in the signature $(+,\cdot)$, without nullary operations. For an
ai-semiring $S$, let $\V(S)$ denote the variety generated by $S$.
By Birkhoff's theorem~\cite{Bir35}, two ai-semirings generate the same
variety if and only if they satisfy the same identities.

The finite basis problem asks whether the identities of an algebra
follow from finitely many identities. Zhao et al.~\cite{ZRCS20}
proved that all three-element ai-semirings, with the possible exception
of $S_7$, are finitely based. Jackson, Ren and Zhao~\cite{JRZ22}
proved that $S_7$ is nonfinitely based, completing this classification.
Gao et al.~\cite{GJRZ25} further studied finite basis properties and
the subvariety structure associated with $S_7$.

For a positive integer $n$, let $\M_n(S)$ be the ai-semiring of all
$n\times n$ matrices over $S$, with entrywise addition and the usual
matrix multiplication. Dolinka~\cite{Dol09} established inherent
nonfinite basability results for ai-semirings, including semirings of
binary relations on finite sets. Jiao and Ren~\cite{JRtwo26} classified
the finite basis problem for matrix semirings over two-element
ai-semirings.

In their subsequent study of $\M_n(S_7)$, Jiao and Ren~\cite{JR26}
proved that, for every
ai-semiring $S$ and every $n\geq2$, the semiring $\M_n(S)$ embeds into
$\M_{n+1}(S)$. Hence the varieties $\V(\M_n(S))$ form an ascending
chain. They also proved that every $\M_n(\sseven)$ is nonfinitely
based and obtained
\begin{equation}\label{eq:chain}
\V(\sseven)<\V(\M_2(\sseven))\leq\V(\M_3(\sseven))\leq\cdots.
\end{equation}
Here $\sseven$ is the three-element ai-semiring whose operations are
given in Table~\ref{tab:s7}.

\begin{table}[ht]
\centering
\caption{The Cayley tables of $\sseven$.}\label{tab:s7}
\renewcommand{\arraystretch}{1.12}
\begin{tabular}{c|ccc}
$+$ & $1$ & $a$ & $\infty$\\\hline
$1$ & $1$ & $\infty$ & $\infty$\\
$a$ & $\infty$ & $a$ & $\infty$\\
$\infty$ & $\infty$ & $\infty$ & $\infty$
\end{tabular}
\qquad\qquad
\begin{tabular}{c|ccc}
$\cdot$ & $1$ & $a$ & $\infty$\\\hline
$1$ & $1$ & $a$ & $\infty$\\
$a$ & $a$ & $\infty$ & $\infty$\\
$\infty$ & $\infty$ & $\infty$ & $\infty$
\end{tabular}
\end{table}

In Section~6 of the second arXiv version of~\cite{JR26}, dated
17 August 2026, Jiao and Ren asked whether
\begin{equation}\label{eq:question}
\V(\M_n(\sseven))=\V(\M_{n+1}(\sseven))
\qquad\text{for every }n\geq2.
\end{equation}
Their Proposition~5.8 shows that the multiplicative reduct of
$\M_n(\sseven)\setminus\{[1]_n\}$ is $5$-nilpotent for every $n\geq2$,
which led them to suggest that~\eqref{eq:question} might hold.
We show that the inclusion from dimension two to dimension three is
strict.

Let $e,b,x,y,z$ be distinct variables and put
\begin{equation}\label{eq:p}
\p=e^3be^3+exye+bx+ye+z^2.
\end{equation}
Our main result is the following.

\begin{theorem}\label{thm:main}
For every integer $n\geq2$, the identity
\begin{equation}\label{eq:identity}
\p\approx\p+bxe
\end{equation}
holds in $\M_n(\sseven)$ if and only if $n=2$.
Consequently,
\[
\V(\M_2(\sseven))<\V(\M_n(\sseven))
\qquad\text{for every }n\geq3.
\]
\end{theorem}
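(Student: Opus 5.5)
The plan is to reduce the identity to a purely combinatorial condition on the entries of the five matrices, and then treat $n=2$ and $n=3$ separately; the case $n\geq 4$ follows from $n=3$ via the embedding $\M_3(\sseven)\hookrightarrow\M_n(\sseven)$ of Jiao and Ren. The strict inclusion then follows at once from~\eqref{eq:chain}, because $\M_2(\sseven)$ satisfies~\eqref{eq:identity} and $\M_3(\sseven)$, hence every $\M_n(\sseven)$ with $n\geq3$, does not.

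The reduction rests on the order on $\sseven$ given by $u\le v$ iff $u+v=v$. Under this order, $1$ and $a$ are incomparable and $\infty$ is the top element. So~\eqref{eq:identity} holds under an assignment exactly when, at every position $(i,j)$, either $\p_{ij}=\infty$ or $(bxe)_{ij}=\p_{ij}$.

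Every entry of a matrix product is a nonempty sum of products of entries, and any sum of two distinct elements of $\sseven$ equals $\infty$. Hence $\p_{ij}=c\neq\infty$ forces every elementary product occurring in the $(i,j)$-entries of $e^3be^3$, $exye$, $bx$, $ye$ and $z^2$ to equal $c$. If $c=1$, every factor in these products is $1$. In particular row $i$ of $b$, all of $x$, and column $j$ of $e$ consist of $1$'s, so $(bxe)_{ij}=1$.

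The essential case is $c=a$. Here every elementary product contains exactly one factor $a$, and all its other factors are $1$. The goal is to show that each product $b_{ik}x_{kl}e_{lj}$ equals $a$.
\begin{itemize}
\item From $(bx)_{ij}=a$: for each $k$, exactly one of $b_{ik}$ and $x_{kj}$ is $a$.
\item From $(ye)_{ij}=a$: for each $m$, exactly one of $y_{im}$ and $e_{mj}$ is $a$.
\item From $(exye)_{ij}=a$: the condition holds simultaneously for all triples $(k,l,m)$. A short case check shows that row $i$ of $e$ is all $1$; otherwise $y$ and column $j$ of $e$ would be all $1$, which contradicts $(ye)_{ij}=a$.
\item From $(e^3be^3)_{ij}=a$, using the cubic-power property of Jiao and Ren and their description of products equal to $[a]_n$: this should pin down the pattern of $a$'s in $e$. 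I expect this to show that the indices $k$ with $e_{kj}=a$ and the indices $k$ with $e_{kj}=1$ form two complementary nonempty classes, or possibly one class.
\end{itemize}

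The main obstacle is combining these constraints to conclude $b_{ik}x_{kl}e_{lj}=a$ for \emph{all} $k,l$. The expected mechanism is the one announced in the abstract. The constraints produce two distinct indices, say $i$ and some $r$, each playing a different role: one index where $e$ is $1$ in column $j$ and one where it is $a$. In dimension two these two indices exhaust $\{1,2\}$. The constraints from $exye$ and $bx$ at each of the two indices then determine $x_{kl}$ in terms of $b_{ik}$ and $e_{lj}$, giving exactly one $a$ in each triple. I would carry this out by an explicit case split on which entries of column $j$ of $e$ equal $a$, using $z^2$ only to guarantee that $c=a$ is attainable consistently.

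For $n=3$, the third index is exactly where this argument breaks. I would build a counterexample at position $(1,2)$ as follows. Take $e$ with row $1$ all $1$, and a column pattern that realizes $(e^3be^3)_{12}=a$ via the $[a]_3$ description. Then place $1$'s and $a$'s in $b,x,y$ so that all constraints hold at $(1,2)$, but some $b_{1k}x_{kl}e_{l2}$ with $k$ or $l$ equal to $3$ is $1$ or $\infty$. Finally choose $z$ with $(z^2)_{12}=a$, for example with $z_{11}=a$, $z_{12}=1$ and $z_{22}=1$, and check entrywise that the remaining products in $(z^2)_{12}$ are $a$ as well. This gives $\p_{12}=a\neq(\p+bxe)_{12}$.
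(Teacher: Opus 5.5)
Your entrywise reduction, the case $c=1$, and the passage from $n=3$ to $n\geq4$ via the embedding are all fine. The $n=2$ argument, however, rests on a false picture of $e$. The cubic-power property does not merely restrict the pattern of $a$'s in $e$. It says $e^3=\Omega_n$ whenever $e\neq J_n$, so $(e^3be^3)_{ij}=a$ forces $e=J_n$ outright. Thus no index $r$ has $e_{rj}=a$, and the ``two complementary classes'' in column $j$ of $e$, together with the mechanism you build on them, do not exist.

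What this step actually yields is the following. Applying $J_nAJ_n=[\sum_{r,s}A_{rs}]_n$ to $e^3be^3$ and to $exye$ gives $b=K_n$ and $xy=K_n$. Hence there is a profile $u_1,\dots,u_n\in\{1,a\}$ with $x_{rk}=u_k$ and $y_{ks}=\overline{u_k}$. Then $(bx)_{ij}=au_j$ and $(ye)_{ij}=\overline{u_i}$ give $u_i=u_j=1$. Finally, $(bxe)_{ij}=a$ holds exactly when every $u_k$ equals $1$.

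You also misidentify the role of $z^2$. It is not there to make $c=a$ attainable. It is the only term that forces $i\neq j$, since $(z^2)_{ii}$ contains the summand $z_{ii}^2\in\{1,\infty\}$. Only then do the two indices $i,j$ exhaust $\{1,2\}$, which gives $u_1=u_2=1$ and $x=J_2$. No argument that uses $z^2$ only for attainability can work, because without $z^2$ the identity already fails in $\mathbf M_2(S_7)$. At $(1,1)$ take $e=J_2$, $b=K_2$ and $u=(1,a)$. Then $e^3be^3$, $exye$, $bx$ and $ye$ all have $(1,1)$-entry $a$, but $b_{11}x_{12}e_{21}=\infty$.

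For $n=3$, the same misconception leaves your counterexample unspecified, and your sample $z$ fails. With $z_{11}=a$ and $z_{12}=z_{22}=1$, the summand $z_{12}z_{22}=1$ makes $(z^2)_{12}=\infty$. A correct witness at $(1,2)$ is $e=J_3$, $b=K_3$ and $u=(1,1,a)$. Then $(bx)_{r3}=\infty$ for every $r$, so $bxe=\Omega_3$. For $z$, take $z_{r2}=a$ for $r\neq2$ and all other entries equal to $1$.
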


The symbol $\p$ in~\eqref{eq:identity} abbreviates the term
in~\eqref{eq:p}. In particular, this is an identity in five variables
in the original signature; none of its symbols denotes a prescribed
matrix. The result gives a negative answer to~\eqref{eq:question}.
It does not determine whether the chain stabilizes at a later term.

For $c\in\sseven$, write $[c]_n$ for the constant $n\times n$ matrix
with entry $c$, and set
\[
\J_n=[1]_n,\qquad \K_n=[a]_n,\qquad \Om_n=[\infty]_n.
\]
The matrix $\Om_n$ is the additive maximum and the multiplicative
zero of $\M_n(\sseven)$. The matrix $\J_n$ is idempotent under
multiplication, but is not a multiplicative identity when $n\geq2$.

\section{Elementary properties}

We shall repeatedly use the following consequence of Table~\ref{tab:s7}:
if $s_1,\ldots,s_m\in\sseven$, $m\geq1$, and $c\in\{1,a\}$, then
\begin{equation}\label{eq:flat}
s_1+\cdots+s_m=c
\quad\Longleftrightarrow\quad
s_1=\cdots=s_m=c.
\end{equation}
Also, a product equals $1$ precisely when all its factors are $1$;
a product equals $a$ precisely when exactly one factor is $a$ and
all the others are $1$.

The first lemma records the cubic-power property from
\cite[Corollary~5.7]{JR26}. We include a short proof for completeness.

\begin{lemma}\label{lem:cube}
Let $n\geq2$ and $A\in\M_n(\sseven)$. Then
\begin{equation}\label{eq:cube}
A^3=
\begin{cases}
\J_n,&A=\J_n,\\
\Om_n,&A\ne\J_n.
\end{cases}
\end{equation}
\end{lemma}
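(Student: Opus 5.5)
The plan is to compute the entries of $A^3$ directly and apply~\eqref{eq:flat}. For indices $i,j$ we have
\[
(A^3)_{ij}=\sum_{k,l=1}^{n}A_{ik}A_{kl}A_{lj},
\]
a sum of $n^2$ terms in which the middle factor $A_{kl}$ runs over \emph{every} entry of $A$. If $A=\J_n$, every term is $1$, so $(A^3)_{ij}=1$ and $\J_n^3=\J_n$. Now suppose some entry $(A^3)_{ij}$ equals $c\in\{1,a\}$. We will show that $c=1$ and $A=\J_n$. It then follows that for $A\neq\J_n$ every entry of $A^3$ lies outside $\{1,a\}$, i.e.\ equals $\infty$, so $A^3=\Om_n$.

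By~\eqref{eq:flat}, every term $A_{ik}A_{kl}A_{lj}$ equals $c$.

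\emph{Case $c=1$.} A product equals $1$ only if all its factors are $1$. Since the middle factor ranges over all entries, every $A_{kl}$ is $1$, so $A=\J_n$.

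\emph{Case $c=a$.} This is the main step. Every factor is in $\{1,a\}$ and, in each term, exactly one factor is $a$. Encode this with $0/1$ indicators:
\[
u_k=[A_{ik}=a],\qquad M_{kl}=[A_{kl}=a],\qquad v_l=[A_{lj}=a].
\]
The condition becomes $u_k+M_{kl}+v_l=1$ in $\mathbb Z$ for all $k,l$. Note that $u_l=M_{il}$ and $v_k=M_{kj}$.

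Put $k=i$ to get $u_l=1-u_i-v_l$ for all $l$. Taking $l=i$ gives $2u_i+v_i=1$, hence $u_i=0$. So $u_l+v_l=1$ for all $l$.

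Put $l=j$ to get $v_k=1-u_k-v_j$ for all $k$. Taking $k=j$ gives $u_j+2v_j=1$, hence $v_j=0$ and $u_j=1$.

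If $i=j$, the conclusions $u_i=0$ and $u_j=1$ already contradict each other. If $i\neq j$, substituting $v_l=1-u_l$ gives $M_{kl}=u_l-u_k$. At $k=j$, $l=i$ this is $M_{ji}=0-1=-1$, which is impossible for an indicator.

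So $c=a$ never occurs, and the lemma follows. The only nontrivial point is this indicator bookkeeping in the case $c=a$. I expect no other obstacle; in particular, the hypothesis $n\ge2$ is not needed for the computation.
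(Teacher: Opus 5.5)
Your proof is correct and follows essentially the paper's route: expand $(A^3)_{ij}$, use the middle factor for $c=1$, and for $c=a$ use the summand $(k,l)=(i,i)$ to get $A_{ii}=1$, $A_{ij}=a$, then get the contradiction from the summand $(k,l)=(j,i)$, which contains $A_{ij}$ twice. Your indicator bookkeeping just repackages this, and since $u_j$ and $v_i$ are the same indicator $[A_{ij}=a]$, the $k=l=j$ step and the separate case $i=j$ can be dropped.
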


\begin{proof}
For $1\leq i,j\leq n$, we have
\[
(A^3)_{ij}=\sum_{k,l=1}^n A_{ik}A_{kl}A_{lj}.
\]
If $(A^3)_{ij}=1$, every summand is $1$ by~\eqref{eq:flat}.
In particular, $A_{kl}=1$ for all $k,l$, and hence $A=\J_n$.

Suppose that $(A^3)_{ij}=a$. Taking $k=l=i$ gives
$A_{ii}^2A_{ij}=a$. Thus $A_{ii}=1$ and $A_{ij}=a$.
However, the summand with $k=j$ and $l=i$ is
$A_{ij}A_{ji}A_{ij}=\infty$, a contradiction.
Therefore $A^3$ has no entry equal to $a$. If $A\ne\J_n$, it also
has no entry equal to $1$, so $A^3=\Om_n$.
The equality $\J_n^3=\J_n$ is immediate.
\end{proof}

For $u\in\{1,a\}$, define its complement by
$\comp1=a$ and $\comp a=1$.

\begin{lemma}\label{lem:factor}
Let $n\geq2$ and $X,Y\in\M_n(\sseven)$.
\begin{enumerate}
\item If $XY=\J_n$, then $X=Y=\J_n$.
\item The equality $XY=\K_n$ holds if and only if there exist
$u_1,\ldots,u_n\in\{1,a\}$ such that
\begin{equation}\label{eq:profile}
X_{rk}=u_k,\qquad Y_{ks}=\comp{u_k}
\qquad(1\leq r,k,s\leq n).
\end{equation}
\end{enumerate}
\end{lemma}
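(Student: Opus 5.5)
Both parts are entrywise computations with the sum $(XY)_{rs}=\sum_{k=1}^n X_{rk}Y_{ks}$ and the flatness property~\eqref{eq:flat}, together with the description of products equal to $1$ or $a$ recorded just before Lemma~\ref{lem:cube}.

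For part (1), suppose $XY=\J_n$. For every $r,s$, the sum $\sum_k X_{rk}Y_{ks}$ equals $1$. By~\eqref{eq:flat}, each summand $X_{rk}Y_{ks}$ equals $1$. A product of two elements of $\sseven$ equals $1$ only when both factors are $1$, so $X_{rk}=Y_{ks}=1$. Since $r,k,s$ range over all indices, this gives $X=Y=\J_n$.

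For part (2), the direction ``$\Leftarrow$'' is immediate. If~\eqref{eq:profile} holds, then $X_{rk}Y_{ks}=u_k\comp{u_k}=a$ for every $k$, since one factor is $1$ and the other is $a$. Hence each entry of $XY$ is a sum of copies of $a$, which equals $a$. This gives $XY=\K_n$.

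For ``$\Rightarrow$'', suppose $XY=\K_n$ and fix $r,k,s$. By~\eqref{eq:flat}, $X_{rk}Y_{ks}=a$. So exactly one of $X_{rk},Y_{ks}$ is $a$ and the other is $1$; in particular both lie in $\{1,a\}$, and $Y_{ks}=\comp{X_{rk}}$. The only real point is that this forces the pattern~\eqref{eq:profile}.
\begin{itemize}
\item Fix $k$ and $s$ and vary $r$. Every $X_{rk}$ equals $\comp{Y_{ks}}$, so column $k$ of $X$ is constant. Call its common value $u_k\in\{1,a\}$.
\item Fix $k$ and $r$ and vary $s$. Every $Y_{ks}$ equals $\comp{X_{rk}}=\comp{u_k}$, so row $k$ of $Y$ is constant with value $\comp{u_k}$.
\end{itemize}
This is exactly~\eqref{eq:profile}. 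The argument needs no assumption relating different values of $k$, so the $u_k$ may be chosen independently, as the statement allows.

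There is no genuine obstacle. The step that needs the most care is the bookkeeping in ``$\Rightarrow$'': the single relation $Y_{ks}=\comp{X_{rk}}$ must be read twice, once to make column $k$ of $X$ constant and once to make row $k$ of $Y$ constant. Complementation is an involution on $\{1,a\}$, so both readings are legitimate.
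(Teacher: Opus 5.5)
Your proof is correct and follows the paper's argument essentially step for step. In both, flatness reduces each part to $X_{rk}Y_{ks}=1$ (resp.\ $=a$) for all $r,k,s$, and the profile comes from reading $Y_{ks}=\comp{X_{rk}}$ once in each free index; the only cosmetic difference is that the paper anchors $u_k=X_{1k}$ and fixes row $k$ of $Y$ first, whereas you make column $k$ of $X$ constant first.
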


\begin{proof}
If $XY=\J_n$, then~\eqref{eq:flat} gives $X_{rk}Y_{ks}=1$
for all $r,k,s$. Both factors must be $1$, proving the first assertion.

Suppose that $XY=\K_n$. Again by~\eqref{eq:flat},
\begin{equation}\label{eq:pair}
X_{rk}Y_{ks}=a\qquad(1\leq r,k,s\leq n).
\end{equation}
Fix $k$ and set $u_k=X_{1k}$. Equation~\eqref{eq:pair} implies
that $u_k\in\{1,a\}$ and $Y_{ks}=\comp{u_k}$ for every $s$.
Using $s=1$ in~\eqref{eq:pair} then yields $X_{rk}=u_k$ for
every $r$. This proves~\eqref{eq:profile}.
Conversely,~\eqref{eq:profile} makes every summand in each entry
of $XY$ equal to $a$, and hence $XY=\K_n$.
\end{proof}

We shall also use the equality
\begin{equation}\label{eq:sandwich}
\J_nA\J_n=\left[\sum_{r,s=1}^n A_{rs}\right]_n.
\end{equation}
It follows from~\eqref{eq:flat} that, for $c\in\{1,a\}$,
the occurrence of $c$ in any entry of $\J_nA\J_n$ forces $A=[c]_n$.

We also give the embedding argument underlying the ascending chain;
see~\cite[Theorem~2.5]{JR26}.

\begin{lemma}\label{lem:embedding}
Let $S$ be an ai-semiring and let $2\leq m\leq n$. Then
$\M_m(S)$ embeds into $\M_n(S)$.
\end{lemma}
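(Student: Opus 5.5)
We embed $\M_m(S)$ into $\M_n(S)$ by padding with a copy of the last row and column. The usual block-diagonal padding with a zero block fails here, because $S$ need not have a zero or an identity, and the signature has no constants. So we only use the semiring operations of $S$.

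Define $\pi\colon\{1,\ldots,n\}\to\{1,\ldots,m\}$ by $\pi(i)=\min(i,m)$, and set
\[
\varphi(A)_{ij}=A_{\pi(i)\pi(j)}\qquad(A\in\M_m(S),\ 1\le i,j\le n).
\]
In words, the last index $m$ is duplicated $n-m+1$ times. The map $\varphi$ is injective, since the top-left $m\times m$ block of $\varphi(A)$ is $A$. It preserves addition entrywise.

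For multiplication we compute
\[
(\varphi(A)\varphi(B))_{ij}=\sum_{k=1}^n A_{\pi(i)\pi(k)}B_{\pi(k)\pi(j)}
=\sum_{l=1}^{m-1}A_{\pi(i)l}B_{l\pi(j)}+\sum_{k=m}^{n}A_{\pi(i)m}B_{m\pi(j)}.
\]
The second sum consists of $n-m+1\ge1$ equal terms. Since $(S,+)$ is idempotent, it collapses to the single term $A_{\pi(i)m}B_{m\pi(j)}$. Hence
\[
(\varphi(A)\varphi(B))_{ij}=(AB)_{\pi(i)\pi(j)}=\varphi(AB)_{ij}.
\]
So $\varphi$ is an injective homomorphism.

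The main point is the multiplicative step, and the only ingredient is additive idempotency, which absorbs the repeated summands. The construction needs $m\ge1$, which holds here. Equivalently, one can iterate the case $n=m+1$.
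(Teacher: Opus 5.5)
Your proof is correct and is essentially the paper's argument. The paper takes an arbitrary surjection $\sigma\colon\{1,\ldots,n\}\to\{1,\ldots,m\}$, sets $\Phi(A)_{ij}=A_{\sigma(i),\sigma(j)}$, and uses additive idempotence to absorb the repeated summands; your $\pi(i)=\min(i,m)$ is one particular choice of such a surjection, and your top-left-block injectivity argument is a concrete instance of the paper's remark that surjectivity makes $\Phi$ injective.
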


\begin{proof}
Choose a surjection $\sigma:\{1,\ldots,n\}\to\{1,\ldots,m\}$,
and define $\Phi(A)_{ij}=A_{\sigma(i),\sigma(j)}$.
Surjectivity makes $\Phi$ injective, and it preserves addition
entrywise. Moreover, additive idempotence gives
\begin{align*}
(\Phi(A)\Phi(B))_{ij}
&=\sum_{k=1}^n A_{\sigma(i),\sigma(k)}B_{\sigma(k),\sigma(j)}\\
&=\sum_{r=1}^m A_{\sigma(i),r}B_{r,\sigma(j)}
=\Phi(AB)_{ij}.
\end{align*}
Indeed, every index $r$ occurs at least once as $\sigma(k)$,
and repeated summands may be deleted. Thus $\Phi$ is an embedding.
\end{proof}

\section{The separating identity}

We first prove the positive part of Theorem~\ref{thm:main}.

\begin{proposition}\label{prop:two}
The ai-semiring $\M_2(\sseven)$ satisfies~\eqref{eq:identity}.
\end{proposition}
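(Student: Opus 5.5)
The plan is to argue entrywise. Fix $e,b,x,y,z\in\M_2(\sseven)$ and an entry $(i,j)$. If $\p_{ij}=\infty$, then both sides of~\eqref{eq:identity} have entry $\infty$ there, because $\infty$ absorbs under addition. So assume $\p_{ij}=c\in\{1,a\}$. It then suffices to show $(bxe)_{ij}=c$. By~\eqref{eq:flat}, each of the five summands of $\p$ has $(i,j)$-entry equal to $c$, and I will extract structural information from each one.

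\emph{Step 1: the first summand.} Since $(e^3be^3)_{ij}\neq\infty$, Lemma~\ref{lem:cube} forces $e^3=\J_2$ (otherwise $e^3=\Om_2$ kills the product), and hence $e=\J_2$. Then $e^3be^3=\J_2b\J_2$, and by~\eqref{eq:sandwich} this gives $b=[c]_2$.

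\emph{Step 2: the case $c=1$.} Here $b=e=\J_2$, and $(exye)_{ij}=(\J_2xy\J_2)_{ij}=1$. By~\eqref{eq:sandwich} this forces $xy=\J_2$. Lemma~\ref{lem:factor}(1) then gives $x=\J_2$, so $bxe=\J_2$, which has entry $1$, as required.

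\emph{Step 3: the case $c=a$.} Now $b=\K_2$ and $e=\J_2$.
\begin{itemize}
\item From $(\J_2xy\J_2)_{ij}=a$ and~\eqref{eq:sandwich}, we get $xy=\K_2$. Lemma~\ref{lem:factor}(2) supplies $u_1,u_2\in\{1,a\}$ with $x_{rk}=u_k$ and $y_{ks}=\comp{u_k}$.
\item From $(bx)_{ij}=\sum_k a\,x_{kj}=a$ we get $x_{kj}=1$ for all $k$, so $u_j=1$.
\item From $(ye)_{ij}=\sum_k y_{ik}=a$ we get $y_{ik}=a$, so $\comp{u_i}=a$, i.e.\ $u_i=1$.
\item The summand $z^2$ is used to show $i\neq j$. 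Indeed, $(z^2)_{ij}=a$ requires $z_{ik}z_{kj}=a$ for every $k$, and $k=i=j$ would need $z_{ii}^2=a$, which is impossible.
\end{itemize}
Hence, in dimension two, $\{i,j\}=\{1,2\}$, so $u_1=u_2=1$ and $x=\J_2$. Then $bxe=\K_2\J_2\J_2=\K_2$, which has entry $a$.

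The main point, and the step I expect to be the crux, is this last combination. The summand $z^2$ forces $i\neq j$, and the two distinct indices $i,j$ then exhaust the index set only because $n=2$. This is exactly what pins down all the $u_k$; everything else is routine application of~\eqref{eq:flat}, Lemma~\ref{lem:cube}, Lemma~\ref{lem:factor} and~\eqref{eq:sandwich}.
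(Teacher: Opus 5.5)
Your proof is correct and follows the paper's argument essentially step for step. Lemma~\ref{lem:cube} and~\eqref{eq:sandwich} force $E=\J_2$ and $B=XY=[c]_2$, and Lemma~\ref{lem:factor} handles both cases; for $c=a$, the summands $bx$ and $ye$ give $u_i=u_j=1$, while $z^2$ forces $i\neq j$, which exhausts the index set when $n=2$ (the only cosmetic difference is that you reuse the variable letters for the substituted matrices, where the paper writes $E,B,X,Y,Z$).
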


\begin{proof}
Let $E,B,X,Y,Z$ be arbitrary matrices in $\M_2(\sseven)$, and put
\[
F=E^3BE^3+EXYE+BX+YE+Z^2.
\]
We prove $F=F+BXE$ entrywise. Fix $i,j\in\{1,2\}$.
If $F_{ij}=\infty$, then $(F+BXE)_{ij}=F_{ij}$ automatically.
Suppose, therefore, that
\[
F_{ij}=c\in\{1,a\}.
\]
By~\eqref{eq:flat}, each of the five summands defining $F$ has
$(i,j)$-entry equal to $c$.

Since $(E^3BE^3)_{ij}=c$, Lemma~\ref{lem:cube} implies $E=\J_2$:
otherwise $E^3BE^3=\Om_2$. Applying~\eqref{eq:sandwich} to
the first two summands gives
\begin{equation}\label{eq:reduction}
B=[c]_2,\qquad XY=[c]_2.
\end{equation}
We consider the two possible values of $c$.

\smallskip
\noindent\emph{Case 1: $c=1$.}
By~\eqref{eq:reduction} and Lemma~\ref{lem:factor}(1),
$B=X=Y=\J_2$. Hence $BXE=\J_2$, so $(BXE)_{ij}=F_{ij}$.

\smallskip
\noindent\emph{Case 2: $c=a$.}
Now $B=\K_2$ and $XY=\K_2$. By Lemma~\ref{lem:factor}(2),
there exist $u_1,u_2\in\{1,a\}$ such that
\[
X_{rk}=u_k,\qquad Y_{ks}=\comp{u_k}
\qquad(r,k,s\in\{1,2\}).
\]
The third and fourth summands in $F$ yield
\[
a=(BX)_{ij}=a u_j,
\qquad
a=(YE)_{ij}=\comp{u_i}.
\]
Consequently,
\begin{equation}\label{eq:endpoints}
u_i=u_j=1.
\end{equation}

The fifth summand gives $(Z^2)_{ij}=a$. This forces $i\ne j$.
Indeed, if $i=j$, the expansion of $(Z^2)_{ii}$ contains the summand
$Z_{ii}^2\in\{1,\infty\}$, contrary to~\eqref{eq:flat}.
Since the index set has only two elements, $\{i,j\}=\{1,2\}$.
It follows from~\eqref{eq:endpoints} that $u_1=u_2=1$, and hence
$X=\J_2$. Thus $BXE=\K_2$, giving $(BXE)_{ij}=F_{ij}$ in this
case as well.

In every case, $(F+BXE)_{ij}=F_{ij}$. Since $i,j$ were arbitrary,
$F=F+BXE$, as required.
\end{proof}

\Needspace{6\baselineskip}
We next construct a counterexample in every higher dimension.

\begin{proposition}\label{prop:higher}
For every $n\geq3$, the ai-semiring $\M_n(\sseven)$ does not
satisfy~\eqref{eq:identity}.
\end{proposition}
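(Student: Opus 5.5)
The plan is to exhibit explicit matrices $E,B,X,Y,Z\in\M_n(\sseven)$ and one entry, namely $(1,2)$, at which $F=E^3BE^3+EXYE+BX+YE+Z^2$ equals $a$ while $(BXE)_{12}=\infty$. Then $(F+BXE)_{12}=\infty\neq F_{12}$, so \eqref{eq:identity} fails. The choice is guided by the proof of Proposition~\ref{prop:two}. There, $F_{ij}=a$ forced $E=\J_n$, $B=\K_n$, and $XY=\K_n$ with a profile $u_1,\dots,u_n$ as in Lemma~\ref{lem:factor}(2), with $u_i=u_j=1$ and $i\neq j$. The only thing that made $BXE=\K_2$ in dimension two was that $\{i,j\}$ exhausted the index set. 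In dimension $n\geq3$ I will use a third index to carry the value $a$ in the profile.

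Concretely, I take $E=\J_n$ and $B=\K_n$, and set $u_1=u_2=1$ and $u_k=a$ for $3\leq k\leq n$. I define $X_{rk}=u_k$ and $Y_{ks}=\comp{u_k}$, so $XY=\K_n$ by Lemma~\ref{lem:factor}(2). For $Z$ I choose the first row $(Z_{11},Z_{12},Z_{13},\dots,Z_{1n})=(1,a,1,\dots,1)$ and the second column $Z_{k2}=1$ for $k=2$ and $Z_{k2}=a$ for $k\neq2$; these agree at $Z_{12}=a$. I set all remaining entries to $\infty$. Then every summand $Z_{1k}Z_{k2}$ of $(Z^2)_{12}$ equals $a$ (for $k=1$ it is $1\cdot a$, for $k=2$ it is $a\cdot1$, for $k\geq3$ it is $1\cdot a$), so $(Z^2)_{12}=a$ by \eqref{eq:flat}.

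Next I check the remaining four summands at $(1,2)$.
\begin{itemize}
\item By Lemma~\ref{lem:cube}, $E^3=\J_n$, so $E^3BE^3=\J_n\K_n\J_n=\K_n$ by \eqref{eq:sandwich}.
\item Likewise $EXYE=\J_n\K_n\J_n=\K_n$.
\item $(BX)_{12}=\sum_k a\,u_2=a$.
\item $(YE)_{12}=\sum_s Y_{1s}=\comp{u_1}=a$.
\end{itemize}
Hence $F_{12}=a$. On the other hand, $(BXE)_{12}=\sum_{k,l}a\,X_{kl}$, and the term with $l=3$ equals $a\cdot a=\infty$. So $(BXE)_{12}=\infty$, which completes the counterexample.

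The only delicate step is designing $Z$ so that $(Z^2)_{12}=a$ in arbitrary dimension $n$. The first row and second column must be filled so that every product $Z_{1k}Z_{k2}$ has exactly one factor equal to $a$; the choice above does this uniformly for all $n\geq3$. All other verifications are routine entry computations using \eqref{eq:flat}.
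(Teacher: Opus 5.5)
Your proof is correct and uses essentially the same construction as the paper. Both take $E=\J_n$ and $B=\K_n$, build $X,Y$ from a profile with $u_1=u_2=1$ and the value $a$ at index $3$ (which makes the $(1,2)$ entry of $BXE$ equal to $\infty$), choose $Z$ so that every summand of $(Z^2)_{12}$ equals $a$, and test the identity at entry $(1,2)$. The differences are cosmetic: the paper sets only $u_3=a$, fills the rest of $Z$ with $1$ rather than $\infty$, and notes that in fact $BXE=\Om_n$.
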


\begin{proof}
Fix $n\geq3$. Set $u_3=a$ and $u_k=1$ for $k\ne3$, and take
$E=\J_n$, $B=\K_n$. Define
\begin{equation}\label{eq:general-witness}
X_{rk}=u_k,\qquad Y_{ks}=\comp{u_k},\qquad
Z_{rs}=\begin{cases}
a,&s=2\text{ and }r\ne2,\\
1,&\text{otherwise}.
\end{cases}
\end{equation}
By Lemma~\ref{lem:factor}, $XY=\K_n$, and hence
\[
E^3BE^3=EXYE=\K_n.
\]
Since $u_1=u_2=1$, we also have
\[
(BX)_{12}=\sum_{k=1}^n a u_2=a,
\qquad
(YE)_{12}=\sum_{k=1}^n\comp{u_1}=a.
\]
For the square term, every summand at coordinate $(1,2)$ equals $a$:
\[
Z_{1k}Z_{k2}=
\begin{cases}
a\cdot1,&k=2,\\
1\cdot a,&k\ne2.
\end{cases}
\]
Thus $(Z^2)_{12}=a$. If $F$ is the value of $\p$ under this
substitution, all five summands have $(1,2)$-entry $a$, so $F_{12}=a$.

On the other hand, $u_3=a$ gives $(BX)_{i3}=a u_3=\infty$
for every $i$. Each entry of $BXE$ therefore contains the summand
$(BX)_{i3}E_{3j}=\infty$, and hence $BXE=\Om_n$.
Consequently, $(F+BXE)_{12}=\infty\ne a=F_{12}$, proving the claim.
\end{proof}

\begin{proof}[Proof of Theorem~\ref{thm:main}]
Propositions~\ref{prop:two} and~\ref{prop:higher} establish the
assertion about~\eqref{eq:identity}. By Lemma~\ref{lem:embedding},
$\M_2(\sseven)$ embeds into $\M_n(\sseven)$ for every $n\geq3$,
so $\V(\M_2(\sseven))\leq\V(\M_n(\sseven))$.
Every member of $\V(\M_2(\sseven))$ satisfies~\eqref{eq:identity},
whereas $\M_n(\sseven)$ does not. The inclusion is therefore strict.
\end{proof}

\section{Concluding remarks}

Theorem~\ref{thm:main} strengthens the beginning of~\eqref{eq:chain} to
\[
\V(\sseven)<\V(\M_2(\sseven))<\V(\M_3(\sseven))
\leq\V(\M_4(\sseven))\leq\cdots.
\]
Thus the common $5$-nilpotency property of the subsemirings
$\M_n(\sseven)\setminus\{\J_n\}$ does not imply equality of the
varieties generated by the full matrix semirings.

The identity records whether two distinct indices exhaust the index
set. At a coordinate $(i,j)$ where $\p$ has value $c\ne\infty$,
the terms $e^3be^3$ and $exye$ force $E=\J_n$ and $B=XY=[c]_n$.
For $c=a$, the terms $bx$ and $ye$ force $u_i=u_j=1$
in~\eqref{eq:profile}, while $z^2$ forces $i\ne j$.
A third index permits a parameter $u_k=a$, which makes $BXE=\Om_n$.

The inclusions between consecutive terms of dimension at least three
are not determined by this argument. In particular, it remains to
decide whether $\V(\M_3(\sseven))=\V(\M_4(\sseven))$ and whether
the chain eventually stabilizes.

\Needspace{20\baselineskip}


\begin{thebibliography}{7}

\bibitem{Bir35}
Birkhoff, G.:
On the structure of abstract algebras.
Proc. Cambridge Philos. Soc. \textbf{31}(4), 433--454 (1935).
\url{https://doi.org/10.1017/S0305004100013463}

\bibitem{Dol09}
Dolinka, I.:
A class of inherently nonfinitely based semirings.
Algebra Universalis \textbf{60}(1), 19--35 (2009).
\url{https://doi.org/10.1007/s00012-008-2084-y}

\bibitem{GJRZ25}
Gao, Z., Jackson, M., Ren, M.M., Zhao, X.Z.:
The finite basis problem for additively idempotent semirings that
relate to $S_7$.
Preprint, \href{https://arxiv.org/abs/2501.19049}{arXiv:2501.19049}
(2025).
\url{https://doi.org/10.48550/arXiv.2501.19049}

\bibitem{JRZ22}
Jackson, M., Ren, M.M., Zhao, X.Z.:
Nonfinitely based ai-semirings with finitely based semigroup reducts.
J. Algebra \textbf{611}, 211--245 (2022).
\url{https://doi.org/10.1016/j.jalgebra.2022.07.042}
\href{https://arxiv.org/abs/2112.13918}{Preprint: arXiv:2112.13918}.

\bibitem{JRtwo26}
Jiao, J., Ren, M.M.:
The finite basis problem for matrix semirings over a two-element
additively idempotent semiring.
Semigroup Forum (2026). Published online 5 August 2026.
\url{https://doi.org/10.1007/s00233-026-10667-x}
\href{https://arxiv.org/abs/2602.06972}{Preprint: arXiv:2602.06972}.

\bibitem{JR26}
Jiao, J., Ren, M.M.:
The finite basis problem for matrix semirings $\M_n(S_7)$.
Preprint, \href{https://arxiv.org/abs/2607.09677v2}{arXiv:2607.09677v2}
(17 August 2026).
\url{https://doi.org/10.48550/arXiv.2607.09677}

\bibitem{ZRCS20}
Zhao, X.Z., Ren, M.M., Crvenkovi\'c, S., Shao, Y., Dapi\'c, P.:
The variety generated by an ai-semiring of order three.
Ural Math. J. \textbf{6}(2), 117--132 (2020).
\url{https://doi.org/10.15826/umj.2020.2.012}

\end{thebibliography}
\end{document}